\newtheorem{theorem}{Theorem}[section]
\newtheorem{lemma}[theorem]{Lemma}
\newtheorem{corollary}[theorem]{Corollary}
\newtheorem{proposition}[theorem]{Proposition}
\theoremstyle{definition}
\newtheorem{example}[theorem]{Example}
\newtheorem{definition}[theorem]{Definition}
\newcommand{\rmv}[1]{}
\title{Properties of Partial Dominating Sets of Graphs}
\author[Case, Fenstermacher, Ganguly, Laskar]{Benjamin M. Case, Todd Fenstermacher, Soumendra Ganguly, Renu C. Laskar}
\address{School of Mathematical and Statistical Sciences, Clemson University, SC, USA}
\thanks{Benjamin M. Case was partially supported by the  National Science Foundation  under grants DMS-1403062 and DMS-1547399.}
\begin{document}

	\begin{abstract}
		A set $S\subseteq V$ is a \textit{dominating set} of $G$ if every vertex in $V - S$ is adjacent to at least one vertex in $S$. The \textit{domination number} $\gamma(G)$ of $G$ equals the minimum cardinality of a dominating set $S$ in $G$; we say that such a set $S$ is a $\gamma$\textit{-set}. A generalization of this is partial domination which was introduced in 2017 by Case, Hedetniemi, Laskar, and Lipman \cite{CHLL17arXiv,CHLL17} .  In \emph{partial domination} a set $S$ is a \emph{$p$-dominating set} if it dominates a proportion $p$ of the vertices in $V$. The \emph{p-domination number} $\gamma_{p}(G)$ is the minimum cardinality of a $p$-dominating set in $G$.  In this paper, we investigate further properties of partial dominating sets, particularly ones related to graph products and locating partial dominating sets.  We also introduce the concept of a \emph{$p$-influencing set} as the union of all $p$-dominating sets for a fixed $p$ and investigate some of its properties. 
		\medskip
		
		Keywords:  partial domination, dominating set, partial domination number, domination number, influencing set, graph parameters, Vizing's conjecture
		
		%
		%
	\end{abstract}
	\maketitle
	\section{Introduction}
	
	Let $G=(V,E)$ be a graph with vertex set $V=\{v_1,v_2,...,v_n\}$ and \emph{order} $n = |V|$. The {\em open neighborhood} of a vertex $v$ is the set $N(v) := \{u\: |\: uv \in E\} $ of vertices $u$ that are adjacent to $v$; the \emph{closed neighborhood} of $v$ is $N[v]:=N(v)\cup \{v\}.$ A set $S\subseteq V$ is a \emph{dominating set} of $G$ if every vertex in $V - S$ is adjacent to at least one vertex in $S$, or equivalently, if $N[S] := \bigcup_{u\in S} N[u] = V$. The \emph{domination number} $\gamma (G)$ of $G$ equals the minimum cardinality of a dominating set $S$ in $G$; we say that such a set $S$ is a \emph{$\gamma$-set}. Domination has been a well studied area for many years \cite{Bollobas,Cock1978,Goncalves,Harary,Hutson}.
	
	For any graph $G=(V,E)$ and proportion $p \in [0,1]$, a set $S\subseteq V$ is a \textit{p-dominating set} if \[\frac{|N[S]|}{|V|} \geq p.\]  The \textit{p-domination number} $\gamma_p(G)$ equals the minimum cardinality of a $p$-dominating set in $G$.  Partial domination was first introduced by  Case, Hedetniemi, Laskar, and Lipman \cite{CHLL17arXiv,CHLL17} in 2017.  Around the same time, in an independent work of Das the same concept was introduced \cite{Das17arXiv,Das2018}.

	As noted in \cite{CHLL17arXiv}, a $\gamma_p$-set is not in general related to a $\gamma$-set. In particular, a $\gamma$-set does not necessarily contain a $\gamma_{p}$-set. Equivalently, a $\gamma_{p}$-set cannot necessarily be extended to a $\gamma$-set. To see this, it is helpful to revisit the subdivided star graph in Figure \ref{ex:Disjoint}, where the $\gamma$-set denoted by triangles is disjoint from $\gamma_{1/2}$-set consisting of just the square vertex. 
	
	\begin{figure}[hh] 
		\centering
		\begin{tikzpicture}
		\node [draw,rectangle] (A) at (0,0) {};
		\node [draw,regular polygon, regular polygon sides=3,inner sep=1.5pt] (B) at (1,0) {};
		\node [draw,regular polygon, regular polygon sides=3,inner sep=1.5pt] (C) at (.707,.707) {};
		\node [draw,regular polygon, regular polygon sides=3,inner sep=1.5pt] (D) at (0,1) {};
		\node [draw,regular polygon, regular polygon sides=3,inner sep=1.5pt] (E) at (-.707,.707) {};
		\node [draw,regular polygon, regular polygon sides=3,inner sep=1.5pt] (F) at (-1,0) {};
		\node [draw,regular polygon, regular polygon sides=3,inner sep=1.5pt] (G) at (-.707,-.707) {};
		\node [draw,regular polygon, regular polygon sides=3,inner sep=1.5pt] (H) at (0,-1) {};
		\node [draw,regular polygon, regular polygon sides=3,inner sep=1.5pt] (I) at (.707,-.707) {};
		\node [draw,circle] (J) at (2,0) {};
		\node [draw,circle] (K) at (1.414,1.414) {};
		\node [draw,circle] (L) at (0,2) {};
		\node [draw,circle] (M) at (-1.414,1.414) {};
		\node [draw,circle] (N) at (-2,0) {};
		\node [draw,circle] (O) at (-1.414,-1.414) {};
		\node [draw,circle] (P) at (0,-2) {};
		\node [draw,circle] (Q) at (1.414,-1.414) {};
		
		\draw[line width=1pt] (A) edge (B);
		\draw[line width=1pt] (B) edge (J);
		\draw[line width=1pt] (A) edge (C);
		\draw[line width=1pt] (C) edge (K);
		\draw[line width=1pt] (A) edge (D);
		\draw[line width=1pt] (D) edge (L);
		\draw[line width=1pt] (A) edge (E);
		\draw[line width=1pt] (E) edge (M);
		\draw[line width=1pt] (A) edge (F);
		\draw[line width=1pt] (F) edge (N);
		\draw[line width=1pt] (A) edge (G);
		\draw[line width=1pt] (G) edge (O);
		\draw[line width=1pt] (A) edge (H);
		\draw[line width=1pt] (H) edge (P);
		\draw[line width=1pt] (A) edge (I);
		\draw[line width=1pt] (I) edge (Q);
		\end{tikzpicture}
		\caption{The $\gamma$-set denoted by triangles is disjoint from the $\gamma_{1/2}$-set consisting of just the square vertex. \label{ex:Disjoint}}
	\end{figure}
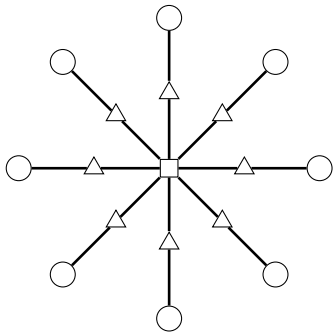

	{\bf Organization of the paper.}  In section \ref{Sec:graphproducts} we generalize Vizing's conjecture about domination in graph products to the setting of partial domination and prove some special cases.  In Section \ref{Sec:locating} we investigate some results related to finding a partial dominating set in a graph. In Section \ref{Sec:pinfluencing} we introduce $p$-influencing sets and consider some properties and examples. 
	
	\section{Graph Products}\label{Sec:graphproducts}
	
	We investigate properties of partial dominating sets related to graph products. 
	In 1986, Vadim G. Vizing conjectured that for domination
	\[	\gamma(G\square H) \geq \gamma(G)\gamma(H).		\]
	Here we conjecture that 
	\[ \gamma_p(G \square H) \geq \gamma_p(G)\gamma_p(H).	\]
	We will primarily be interested in the case when $p = 1/2.$

	\begin{proposition} For paths $P_m$ and $P_n,$ with $n \geq m \geq 2$
		\[ \gamma_{1/2}(P_m \square P_n) \geq \gamma_{1/2}(P_m)\gamma_{1/2}(P_n). \]
	\end{proposition}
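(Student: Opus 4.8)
The plan is to get exact formulas for all three terms. First I would establish that for a path $P_k$, a $\gamma_{1/2}$-set must closed-neighborhood-dominate at least $\lceil k/2 \rceil$ vertices; since each vertex of a path has closed neighborhood of size at most $3$, we get $\gamma_{1/2}(P_k) \geq \lceil \lceil k/2\rceil / 3 \rceil = \lceil k/6 \rceil$, and a simple greedy construction (take every third vertex starting from $v_2$) shows this is tight, so $\gamma_{1/2}(P_k) = \lceil k/6 \rceil$. Thus the right-hand side is exactly $\lceil m/6\rceil \lceil n/6 \rceil$.

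**Lower-bounding the left-hand side.** For the grid $P_m \square P_n$, every vertex has closed neighborhood of size at most $5$, and a $1/2$-dominating set must cover at least $\lceil mn/2 \rceil$ vertices, giving the crude bound $\gamma_{1/2}(P_m \square P_n) \geq \lceil mn/10 \rceil$. The hope is that $\lceil mn/10 \rceil \geq \lceil m/6 \rceil \lceil n/6 \rceil$, but this fails for small or thin grids (e.g. $m=n=2$ gives $1 \geq 1$, fine, but $m=2, n=7$ gives $\lceil 14/10\rceil = 2$ versus $1 \cdot 2 = 2$, still fine — one should check the genuinely bad cases carefully). The main obstacle is precisely that the degree-$5$ bound is too lossy near the boundary of the grid and for narrow strips $P_2 \square P_n$ or $P_3 \square P_n$; I expect to need a sharper structural argument there, for instance partitioning the $P_m \square P_n$ grid into $\lceil m/6\rceil$ horizontal bands and $\lceil n/6 \rceil$ vertical bands and arguing that a $1/2$-dominating set of the product, restricted appropriately, must be "large" in enough bands.

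**Handling the narrow cases directly.** Because $m \leq n$, the truly problematic regime is $m \in \{2,3,4,5\}$, where $\lceil m/6 \rceil = 1$ and the inequality reduces to the one-dimensional claim $\gamma_{1/2}(P_m \square P_n) \geq \lceil n/6 \rceil = \gamma_{1/2}(P_n)$. For these bounded widths I would treat each value of $m$ separately: a $1/2$-dominating set of $P_m \square P_n$ with $t$ vertices dominates at most roughly $5t$ vertices, but more carefully, projecting onto the length-$n$ direction, the set of columns "touched" by closed neighborhoods of the $t$ chosen vertices has size at most $3t$ (each chosen vertex influences at most $3$ consecutive columns), and to dominate half of the $mn$ vertices one needs to dominate vertices spread over enough columns — quantifying this should yield $t \geq \lceil n/6 \rceil$ once $m$ is fixed. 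For $m \geq 6$ both ceilings are at least $2$ and the global counting bound $\lceil mn/10\rceil$ has enough slack, though I would still verify the boundary of that regime ($m=6,7,\dots$) by hand or by a clean inequality $mn/10 \geq (m/6+1)(n/6+1)$-type estimate valid once $m,n$ are not too small.

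**Summary of the plan.** In order: (1) prove $\gamma_{1/2}(P_k) = \lceil k/6 \rceil$ via the degree-$3$ counting bound and an explicit optimal set; (2) for $m \geq 6$, combine the degree-$5$ counting bound on the grid with an arithmetic inequality to conclude; (3) for $m \in \{2,3,4,5\}$, prove the sharper column-projection bound $\gamma_{1/2}(P_m\square P_n) \geq \lceil n/6\rceil$ case by case. Step (3) is the real work and the most likely source of hidden subtleties, since the column-projection argument must be made genuinely rigorous rather than heuristic.
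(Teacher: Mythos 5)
Your plan is essentially correct but follows a genuinely different, self-contained route: the paper's proof is a one-line citation of Theorem 2.8 of \cite{CHLL17}, which gives the exact values $\gamma_{1/2}(P_n)=\lceil n/6\rceil$, $\gamma_{1/2}(P_2\square P_n)=\lceil n/4\rceil$, and $\gamma_{1/2}(P_m\square P_n)=\lceil mn/10\rceil$ for $m\geq 3$, from which the inequality is a ceiling-arithmetic check. Your approach rederives what is needed from first principles (degree-$3$ and degree-$5$ counting bounds plus an explicit construction), which is more work but makes the proposition independent of the prior paper. Two corrections to your plan, though. First, the failure you fear in step (2) never occurs: the chain $\gamma_{1/2}(P_m\square P_n)\geq\lceil mn/10\rceil\geq\lceil m/6\rceil\lceil n/6\rceil\geq\gamma_{1/2}(P_m)\gamma_{1/2}(P_n)$ closes for \emph{every} $2\leq m\leq n$, so all of step (3), which you call ``the real work,'' is unnecessary. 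Indeed, with $a=\lceil m/6\rceil$ and $b=\lceil n/6\rceil$: if $a=1$ then $m\geq 2$ and $n\geq 6b-5$ give $mn/10\geq(12b-10)/10=(b-1)+b/5>b-1$, so $\lceil mn/10\rceil\geq b=ab$; if $a,b\geq 2$ then $m\geq 6a-5\geq 7$ and $n\geq 6b-5\geq 7$ give $mn-10(ab-1)\geq 26ab-30a-30b+35=(26a-30)b-30a+35\geq 2(26a-30)-30a+35=22a-25\geq 19>0$, so again $\lceil mn/10\rceil\geq ab$. Second, if you did keep the narrow-strip argument, your column-projection idea is already rigorous as stated and is not a source of hidden subtlety: the closed neighborhoods of $t$ vertices meet at most $3t$ columns, each column contains $m$ vertices, so dominating $mn/2$ vertices forces $3tm\geq mn/2$, i.e., $t\geq n/6$ and hence $t\geq\lceil n/6\rceil$. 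Finally, note that only the upper bound $\gamma_{1/2}(P_k)\leq\lceil k/6\rceil$ from your step (1) is actually needed for the right-hand side.
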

	
	\begin{proof} The claim follows directly from Theorem 2.8 in \cite{CHLL17}, which says
		\begin{enumerate}
			\item $\gamma_{1/2}(P_n) = \lceil n/6 \rceil,$
			\item for $m=2,$ $\gamma_{1/2}(P_2 \square P_n) = \lceil n/4 \rceil,$
			\item for $m \geq 3,$ $\gamma_{1/2}(P_m \square P_n) = \lceil mn/10 \rceil.$
		\end{enumerate}
	\end{proof}
	
	Note that if $\gamma_{p}(G) = \gamma_{p}(H) = 1,$ then the conjecture holds trivially (e.g., if $G$ and $H$ are both complete graphs). 
	
	\begin{proposition} For complete graphs, $K_m$ and $K_n,$ 
		\[	\gamma_{1/2}(K_m \square K_n) = \left\lceil \frac{m+n - \sqrt{m^2 + n^2}}{2} \right\rceil. \]
	\end{proposition}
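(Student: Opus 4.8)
The plan is to work with the standard description of $K_m\square K_n$ as the \emph{rook's graph}: its vertex set is the array $\{1,\dots,m\}\times\{1,\dots,n\}$, and $(i,j)$ is adjacent to $(i',j')$ exactly when $i=i'$ (same ``row'') or $j=j'$ (same ``column''). Thus $N[(i,j)]$ is the union of the $i$th row and the $j$th column, and for any $S\subseteq V$, if $R$ denotes the set of rows met by $S$ and $C$ the set of columns met by $S$, then $N[S]=(R\times\{1,\dots,n\})\cup(\{1,\dots,m\}\times C)$; hence $|N[S]|=an+bm-ab$ with $a=|R|$, $b=|C|$.

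Next I would compute, for each budget $k\ge 1$, the maximum of $|N[S]|$ over sets with $|S|=k$. A $k$-vertex set meets at most $k$ rows and at most $k$ columns, so $a\le\min(k,m)$ and $b\le\min(k,n)$; conversely any such pair $(a,b)$ with $a,b\ge 1$ is realized by $\max(a,b)\le k$ suitably chosen vertices (a partial diagonal, with the surplus vertices placed inside one already-used row). Since $f(a,b)=an+bm-ab$ has $\partial_a f=n-b\ge 0$ and $\partial_b f=m-a\ge 0$ on this range, its maximum over the admissible box is attained at $a=\min(k,m)$, $b=\min(k,n)$. Taking $k=\min(m,n)$ already yields $|N[S]|=mn\ge mn/2$, so the optimal $k$ is at most $\min(m,n)$, and for every such $k$ the best coverage equals $k(m+n-k)$.

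It then suffices to find the least integer $k$ with $k(m+n-k)\ge mn/2$, equivalently $2k^2-2(m+n)k+mn\le 0$. The roots of this quadratic are $\tfrac12\bigl((m+n)\pm\sqrt{m^2+n^2}\,\bigr)$, both nonnegative because $(m+n)^2\ge m^2+n^2$, so the inequality holds precisely for $k$ in the closed interval between them. That interval has length $\sqrt{m^2+n^2}\ge\sqrt2>1$, so it contains an integer; its least integer is $\bigl\lceil\tfrac12(m+n-\sqrt{m^2+n^2})\bigr\rceil$, and since $\tfrac12(m+n-\sqrt{m^2+n^2})<\tfrac12(m+n-\max(m,n))=\tfrac12\min(m,n)$ this ceiling is $\le\min(m,n)$, consistent with the range found above. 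Hence $\gamma_{1/2}(K_m\square K_n)=\bigl\lceil\tfrac12(m+n-\sqrt{m^2+n^2})\bigr\rceil$.

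All the individual steps are routine; the step I expect to require the most care is the middle one — pinning down that the maximum coverage on $k$ vertices is exactly $k(m+n-k)$. This needs the two-sided claim that $\max(a,b)$ vertices are necessary and sufficient to meet $a$ prescribed rows and $b$ prescribed columns, together with the check that the resulting optimal $k$ stays in the regime $k\le\min(m,n)$ where that closed form is valid, including the small cases $m=1$ or $n=1$ (where $K_m\square K_n$ is a complete graph and $\gamma_{1/2}=1$).
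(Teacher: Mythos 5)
Your proof is correct, and it reaches the paper's key quantity $k(m+n)-k^2$ by a genuinely different and tighter route. The paper argues sequentially: it describes a greedy strategy (always pick a vertex in a fresh row and fresh column, so the $k$th vertex covers $m+n-2k+1$ new vertices), and justifies its optimality by an exchange comparison against one family of alternative strategies, closing with ``using a similar argument, any other choice is suboptimal'' --- so the optimality of the partial diagonal is only sketched there. You instead prove it globally: writing $|N[S]|=an+bm-ab$ where $a,b$ are the numbers of rows and columns met by $S$, bounding $a\le\min(k,m)$, $b\le\min(k,n)$, and using monotonicity of $f(a,b)=an+bm-ab$ on that box to pin the maximum at the corner, which a partial diagonal realizes. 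This replaces the paper's informal exchange argument with a complete one-line optimization, at the cost of having to check realizability of the corner and that the optimal $k$ stays below $\min(m,n)$ --- both of which you do (the latter via $\tfrac12(m+n-\sqrt{m^2+n^2})<\tfrac12\min(m,n)$). The final step, solving $2k^2-2(m+n)k+mn\le 0$ and taking the ceiling of the smaller root, is identical in both treatments; your additional observation that the root interval has length $\sqrt{m^2+n^2}>1$ and hence contains an integer is a small but genuine point the paper leaves implicit.
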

	
	\begin{proof} We consider choosing vertices for a $\gamma_{1/2}$-set $S.$ Say the first vertex added to $S$ is $v_1=(v,v').$ Note that $v$ dominates exactly $m+n-1$ vertices. If $m+n-1 \geq mn/2,$ we are done. If not, we must add another vertex $u$ to $S.$ Note we essentially have three options here: 
		\[(1) \hspace{1ex} v_2 = (v,u') \hspace{1cm} (2) \hspace{1ex} v_2 = (u,v') \hspace{1cm}    (3) \hspace{1ex} v_2 = (u,u')  \]
		where $u \neq v$ and $u' \neq v'.$ In the first case, $v_2$ dominates exactly $m-1$ vertices not already dominated by $S.$ In the second case $v_2$ dominates $n-1$ such vertices, and in the last case $v_2$ dominates $m+n-3$ such vertices. 
		
		Now in general, when adding the next vertex to $S,$ it is helpful to consider which copies of $K_m$ and $K_n$ the vertex is coming from relative to the last vertex chosen. We have these three options: (1) choose the same $K_m$ but a different $K_n,$ (2) choose a different $K_m$ but the same $K_n,$ (3) choose a different $K_m$ and $K_n.$ In general option (3) is optimal, followed by option (1), then option (2). When option (3) is used every time in such a way that each vertex added to $S$ comes from a copy of $K_m$ and $K_n$ not previously used, then the $k$th vertex dominates $m+n-2k+1$ vertices not already dominated by $S.$ We claim this is the optimal strategy in choosing vertices for a $\gamma_{1/2}$-set. We refer to it as $*$
		
		To see this, consider alternative choice of vertices, where we execute option (2) optimally $\ell$ times, and then execute option (3) optimally until we have a $1/2$-dominating set. We refer to this as $**.$ Now compare the number of vertices dominated by $*$ and $**$ at each step. There is no difference for the first vertex. For the second, $**$ is dominates $n-2$ fewer than $*.$ In general, for the $k$th vertex added in $**$ with $2 \leq k \leq\ell+1,$ $**$ dominates $(k-1)(n-k)$ fewer vertices than $*$. So at step $\ell+1,$ $**$ dominates $\ell(n-\ell-1)$ fewer vertices than $*.$ Now for each vertex added after step $\ell+1,$ $**$ dominates $\ell$ more vertices than $*.$ Thus, it will take $**$ 
		$(n-\ell-1)+1+\ell = n$ steps to dominate the same number of vertices as $*.$ However, $*$ will have produced a $1/2$-dominating set well before the $n$th step. 
		Using a similar argument, we can show that any choice of vertices different than $*$ will be suboptimal. 
		
		Now the cardinality of the $\gamma_{1/2}$-set found by $*$ is 
		\[	\min\{	k \text{ }|\text{ } \sum_{i=1}^k \left( m + n - (2i-1) \right) \geq \frac{mn}{2}	\}.	\]
		
		But note that $\sum_{i=1}^k \left( m + n - (2i-1) \right) = k(m+n) - k^2.$
		So the inequality to solve is 
		\[	k(m+n) - k^2 \geq \frac{mn}{2}		\]
		which has minimum positive integer solution 
		\[ k = 	\left\lceil \frac{m+n - \sqrt{m^2 + n^2}}{2} \right\rceil	.	\]
	\end{proof}

	\begin{proposition}
		For a path $P_n$ and complete graph $K_m,$ 
		\[ \gamma_{1/2}(P_n \square K_m) = \left \lceil \frac{mn}{2(m+2)} \right\rceil. \]
	\end{proposition}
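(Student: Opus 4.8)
The plan is to prove the stated equality by combining a counting lower bound with a matching explicit construction. Identify the vertices of $P_n \square K_m$ with pairs $(i,a)$, $1\le i\le n$, $1\le a\le m$, and call each set $C_i := \{i\}\times\{1,\dots,m\}$ a \emph{column}; two vertices are adjacent exactly when they lie in a common column, or they share their second coordinate and their first coordinates differ by $1$. Since $m=1$ gives $P_n\square K_1\cong P_n$, for which $\lceil mn/(2(m+2))\rceil=\lceil n/6\rceil=\gamma_{1/2}(P_n)$ by Theorem 2.8 of \cite{CHLL17}, I may assume $m\ge 2$ (the case $n=1$ is $K_m$, handled trivially).

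For the lower bound, note $N[(i,a)] = C_i\cup\big(\{(i-1,a),(i+1,a)\}\cap V\big)$, so $|N[v]|\le m+2$ for every vertex $v$, with equality only when $2\le i\le n-1$. Hence for any $\tfrac12$-dominating set $S$,
\[
\frac{mn}{2}\;\le\;|N[S]|\;\le\;\sum_{v\in S}|N[v]|\;\le\;|S|\,(m+2),
\]
so $|S|\ge mn/(2(m+2))$ and therefore $|S|\ge\lceil mn/(2(m+2))\rceil$.

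For the upper bound, set $k:=\lceil mn/(2(m+2))\rceil$ and construct a $\tfrac12$-dominating set $S$ of size $k$ by choosing one vertex from each of $k$ columns that are pairwise at distance at least $2$ along the path, assigning consecutive chosen vertices distinct second coordinates (possible since $m\ge 2$). Because $mn/(2(m+2))<n/2$ we have $k\le\lceil n/2\rceil$, so such a placement exists; a convenient choice is columns $2,5,8,\dots,3k-1$ when $k\le n/3$, columns $2,4,\dots,2k$ when $n/3<k$ and $2k\le n$, and columns $1,3,\dots,2k-1$ when $2k>n$ (which, as one checks, forces $n$ odd and $k=(n+1)/2$). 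Every chosen column is fully dominated, contributing $m$ vertices, and each chosen vertex dominates exactly one further vertex in each adjacent column; since consecutive chosen vertices differ in the second coordinate, all of these extra dominated vertices are distinct. Thus $|N[S]|=k(m+2)$ unless a chosen column is $C_1$ or $C_n$, in which case one extra vertex is lost at each such endpoint, so $|N[S]|\ge k(m+2)-2$ always.

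It remains to verify $|N[S]|\ge mn/2$, and here is the only real subtlety: the boundary deficit of $1$ or $2$ must be absorbed by the slack that the ceiling forces into $k$ in exactly those configurations. When $2k\le n-1$ no chosen column is an endpoint, so $|N[S]|=k(m+2)\ge mn/2$ by the choice of $k$. A deficit of $1$ occurs only when $2k=n$, forcing $k=n/2$ and $|N[S]|=\tfrac{n}{2}(m+2)-1=\tfrac{mn}{2}+n-1\ge mn/2$. A deficit of $2$ occurs only in the third case, where $k=(n+1)/2$, and then $|N[S]|=\tfrac{n+1}{2}(m+2)-2=\tfrac{mn}{2}+n+\tfrac{m}{2}-1\ge mn/2$. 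In every case $S$ is a $\tfrac12$-dominating set of size $k$, so $\gamma_{1/2}(P_n\square K_m)\le k$, which with the lower bound gives the claimed equality. The main obstacle, as indicated, is arranging the construction and the endpoint bookkeeping so that this small boundary loss is always covered.
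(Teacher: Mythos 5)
Your proof is correct and follows essentially the same strategy as the paper's: a counting lower bound from $|N[v]|\le m+2$ combined with an upper-bound construction that packs chosen vertices into well-separated columns so their closed neighborhoods are (nearly) disjoint. Your version is executed more carefully than the paper's, particularly in the explicit bookkeeping of the endpoint deficits, but it is not a different approach.
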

	
	\begin{proof} The maximum degree of a vertex in $P_n \square K_m$ is $m+2.$ Consider the set of vertices $S$ of maximum cardinality such that each vertex in $S$ dominates exactly $m+2$ vertices, and no two vertices in $S$ dominate the same vertex. The cardinality of $S$ is  $\left\lceil \frac{n-2}{2} \right\rceil.$ 
		Now write $n=2k$ $(n=2k-1)$ for even (odd) $n$ and $k\in \mathbb{N}.$  Note that in either case 
		\[ \left\lceil \frac{n-2}{2} \right\rceil = k-1. \]
		That is $S$ has cardinality $k-1.$ 
		
		Now suppose $(m+2)(k-1) \geq mn/2.$ Then either $S$ or a subset of $S$ is a $\gamma_{1/2}$-set, and \[ \gamma_{1/2}(P_n \square K_m) = \left \lceil \frac{mn}{2(m+2)} \right\rceil. \]
		
		On the other hand, suppose $(m+2)(k-1) < mn/2.$ Then neither $S$ nor any subset of $S$ is a $\gamma_{1/2}$-set. However, we may add a vertex $v$ to $S$ so $S\cup\{v\}$ dominates at least $k$ copies of $K_m.$ That is, $S \cup \{v\}$ is a $\gamma_{1/2}$-set. Moreover, we have 
		\[  k(m+2) > \frac{mn}{2} > (k-1)(m+2)  \implies k = \left \lceil \frac{mn}{2(m+2)} \right\rceil. \]
		
	\end{proof}

	
	
	\begin{proposition} Let $G$ be a graph of order $n,$
		\[ \gamma_{1/2}(G \square P_2) \geq \gamma_{1/2}(G). \]
	\end{proposition}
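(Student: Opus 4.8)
The plan is to start from a minimum $1/2$-dominating set $S$ of $G\square P_2$ and project it onto the first coordinate to produce a $1/2$-dominating set of $G$ of no larger size. Write $V(P_2)=\{1,2\}$, so that $V(G\square P_2)=V\times\{1,2\}$ has order $2n$, and a $\gamma_{1/2}$-set $S$ of $G\square P_2$ satisfies $|N[S]|\ge n$. Define the projection $S'=\{v\in V:(v,1)\in S\text{ or }(v,2)\in S\}$, so that $|S'|\le|S|=\gamma_{1/2}(G\square P_2)$. It therefore suffices to prove that $S'$ is a $1/2$-dominating set of $G$, i.e.\ that $|N[S']|\ge n/2$, since then $\gamma_{1/2}(G\square P_2)=|S|\ge|S'|\ge\gamma_{1/2}(G)$.

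The key step is the containment $N[S]\subseteq N[S']\times\{1,2\}$, which I would obtain by unwinding the neighborhood structure of the Cartesian product. In $G\square P_2$ the closed neighborhood of a vertex $(v,i)$ is $\bigl(N[v]\times\{i\}\bigr)\cup\{(v,3-i)\}$. Hence if $(w,i)\in N[S]$ then there is some $(v,j)\in S$ with $(w,i)\in N[(v,j)]$, and either $j=i$ with $w\in N[v]$, or $j=3-i$ with $w=v$; in the first case $v\in S'$ and $w\in N[v]\subseteq N[S']$, and in the second case $w=v\in S'\subseteq N[S']$. Either way $w\in N[S']$, which establishes the containment.

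Finishing is then immediate: the containment gives $n\le|N[S]|\le 2\,|N[S']|$, hence $|N[S']|\ge n/2$, so $S'$ is a $1/2$-dominating set of $G$ and the proposition follows. There is essentially no obstacle beyond carefully bookkeeping the product structure; the one point worth flagging is that the estimate is tight, since passing to the $G$-coordinate can at most halve the size of the dominated set while $1/2$ of $2n$ is exactly $n$, so the factor $2$ in $|N[S]|\le 2|N[S']|$ is precisely what lets the $1/2$ threshold survive. Note also that, since $\gamma_{1/2}(P_2)=\lceil 2/6\rceil=1$, this proposition is exactly the conjectured Vizing-type inequality $\gamma_{1/2}(G\square P_2)\ge\gamma_{1/2}(G)\gamma_{1/2}(P_2)$ specialized to $H=P_2$.
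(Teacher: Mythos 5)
Your proof is correct and follows essentially the same route as the paper: both project the minimum $1/2$-dominating set of $G\square P_2$ onto a single copy of $G$ and show the projection, which is no larger, still dominates half of $G$. The only cosmetic difference is that the paper picks (WLOG) the copy of $G$ containing at least half of $N[S]$ and argues directly, whereas you use the containment $N[S]\subseteq N[S']\times\{1,2\}$ plus a factor-of-two count.
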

	
	\begin{proof}
		Let $S$ be a $\gamma_{1/2}(G \square P_2)$-set. Now think of $G \square P_2$ as two copies of $G,$ say $G_1$ and $G_2.$ Then since $|N[S]| \geq n,$ we have WLOG that $|N[S] \cap G_1| \geq n/2.$ Denote $S\cap G_i$ as $S_i.$ Now consider the vertex set \[S' = S_1 \cup \{v \hspace{1ex}|\hspace{1ex}  v \in N[S_2]\cap G_1	\}.\] Then $S'$ dominates at least half of $G,$ and $|S'| \leq |S|.$ 
		
		Thus, given a $\gamma_{1/2}(G \square P_2)$-set, we can find a vertex set of $G$ of size at most $\gamma_{1/2}(G \square P_2)$ which dominates $1/2$ of $G.$ So $\gamma_{1/2}(G \square P_2) \geq \gamma_{1/2}(G).$
	\end{proof}
	
	\begin{proposition}
		If $\gamma_{1/2}(G) = 1,$ then 
		\[ \gamma_{1/2}(G \square P_m) \geq \gamma_{1/2}(P_m).		\]
	\end{proposition}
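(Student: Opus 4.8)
The plan is to view the Cartesian product $G \square P_m$ as $m$ consecutive ``layers'' $G_1, G_2, \ldots, G_m$, each a copy of $G$, where a vertex in layer $i$ is joined only to its clone in layers $i-1$ and $i+1$ (besides its $G$-neighbours inside layer $i$). Write $n = |V(G)|$, let $S$ be a $\gamma_{1/2}$-set of $G \square P_m$, and let $T = \{\, i \in \{1,\ldots,m\} : S \cap V(G_i) \neq \emptyset \,\}$ be the set of layers that $S$ actually meets. Since each vertex of $S$ lies in exactly one layer, $|T| \le |S|$. I will show that $T$ is a $1/2$-dominating set of $P_m$; then $\gamma_{1/2}(P_m) \le |T| \le |S| = \gamma_{1/2}(G \square P_m)$, which is the claim.

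The key observation is a locality property of the Cartesian product: the closed neighbourhood of any vertex $(u,i)$ is contained in $V(G_{i-1}) \cup V(G_i) \cup V(G_{i+1})$. Consequently, if $i \notin N_{P_m}[T]$ — that is, if none of the layers $i-1, i, i+1$ meets $S$ — then no vertex of layer $G_i$ is dominated by $S$, so $N[S] \cap V(G_i) = \emptyset$. Every layer contributes at most $n$ vertices to $N[S]$, and the layers outside $N_{P_m}[T]$ contribute nothing, so
\[
\frac{mn}{2} \le |N[S]| = \sum_{i=1}^{m} |N[S] \cap V(G_i)| \le |N_{P_m}[T]| \cdot n .
\]
Dividing by $n$ gives $|N_{P_m}[T]| \ge m/2$, i.e.\ $T$ is a $1/2$-dominating set of $P_m$, completing the argument.

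I should note that this reasoning never uses the hypothesis $\gamma_{1/2}(G) = 1$: the same projection argument shows $\gamma_{1/2}(G \square H) \ge \gamma_{1/2}(H)$ for arbitrary graphs $G$ and $H$ (replace the layers indexed by $\{1,\dots,m\}$ by layers indexed by $V(H)$, and $N_{P_m}$ by $N_H$), hence a fortiori the Vizing-type inequality $\gamma_{1/2}(G \square H) \ge \gamma_{1/2}(G)\gamma_{1/2}(H)$ whenever one of $\gamma_{1/2}(G), \gamma_{1/2}(H)$ equals $1$. For this reason I do not expect a genuine obstacle; the only point that needs care is the locality property of the product — that a layer can only receive domination from itself and its two neighbouring layers — which is exactly where the Cartesian (as opposed to, say, strong or tensor) product structure enters, and which makes the counting step clean.
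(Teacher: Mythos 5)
Your proof is correct, but it takes a genuinely different route from the paper's. The paper argues by a degree count: every closed neighbourhood in $G$ has size at most $n$, hence every closed neighbourhood in $G \square P_m$ has size at most $n+2$, so a $1/2$-dominating set of the product must contain at least $\left\lceil \frac{mn}{2(n+2)} \right\rceil \geq \lceil m/6 \rceil$ vertices, and the right-hand side equals $\gamma_{1/2}(P_m)$ by the closed-form formula $\gamma_{1/2}(P_m)=\lceil m/6\rceil$. You instead project a $\gamma_{1/2}$-set $S$ of the product onto the $P_m$-factor and show that the set $T$ of layers meeting $S$ is itself a $1/2$-dominating set of $P_m$, via the locality estimate $\frac{mn}{2} \leq |N[S]| \leq n\,|N_{P_m}[T]|$; this is sound, and the one point needing care (that a layer can only be dominated from itself and its two neighbouring layers) is handled correctly. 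Your approach buys quite a lot: it uses neither the hypothesis $\gamma_{1/2}(G)=1$ nor the formula for $\gamma_{1/2}(P_m)$, it works verbatim for every $p\in[0,1]$ and for an arbitrary graph $H$ in place of $P_m$, yielding $\gamma_p(G\square H)\geq \gamma_p(H)$ in general, and by projecting onto the other factor it also subsumes the paper's separate proposition that $\gamma_{1/2}(G\square P_2)\geq \gamma_{1/2}(G)$ as well as the $\gamma_{1/2}(G)=1$ case of the partial Vizing-type conjecture for arbitrary $H$. The paper's computation is shorter given that the value of $\gamma_{1/2}(P_m)$ is already in hand, but it is strictly less general (and, as you observe, its stated hypothesis is not really doing any work there either).
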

	
	\begin{proof}
		Let $|V(G)| = n.$ Since $\gamma_{1/2}(G) = 1,$ there exists some $v \in V(G)$ such that $|N[v]| \geq \left\lceil \frac{n}{2} \right\rceil.$ Now at best, $|N[v]| = n,$ in which case, $\max \{ |N[v]| : v \in V(G \square P_m) \} = n+2.$ Thus we have 
		\[   \gamma_{1/2}(G \square P_m) \geq \left \lceil \frac{mn}{n+2} \right \rceil  \geq \left \lceil \frac{m}{3} \right \rceil \geq \left \lceil \frac{m}{6} \right \rceil = \gamma_{1/2}(P_m). \]

	\end{proof}

	\begin{proposition}
		If $\gamma_{1/2}(G) = 2,$ then
		\[ \gamma_{1/2}(G \square P_m) \geq 2\gamma_{1/2}(P_m).		\]
	\end{proposition}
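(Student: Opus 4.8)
The plan is to combine a crude degree-counting lower bound on $\gamma_{1/2}(G\square P_m)$ with the exact value $\gamma_{1/2}(P_m)=\lceil m/6\rceil$ from Theorem~2.8 of \cite{CHLL17}, plus a separate look at the two smallest values of $m$.

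First I would extract what the hypothesis buys us. Writing $n=|V(G)|$, the condition $\gamma_{1/2}(G)=2$ implies in particular that $\gamma_{1/2}(G)>1$, i.e.\ no single vertex $\tfrac12$-dominates $G$; thus $|N_G[v]|<n/2$, equivalently $|N_G[v]|\le\lceil n/2\rceil-1$, for every $v\in V(G)$. Since $1\le|N_G[v]|\le\lceil n/2\rceil-1$, this forces $n\ge3$. Passing to the product, the closed neighborhood of a vertex $(v,i)$ in $G\square P_m$ is $\{(u,i):u\in N_G[v]\}$ together with at most the two vertices $(v,i-1),(v,i+1)$, so every closed neighborhood in $G\square P_m$ has size at most $\lceil n/2\rceil+1$.

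Next I would turn this into the bound. Since $|V(G\square P_m)|=mn$, a $\tfrac12$-dominating set $S$ must satisfy $mn/2\le|N[S]|\le|S|\cdot(\lceil n/2\rceil+1)$, and because $\lceil n/2\rceil+1\le n$ for $n\ge2$ this gives $|S|\ge mn/\bigl(2(\lceil n/2\rceil+1)\bigr)\ge m/2$, hence (integrality) $\gamma_{1/2}(G\square P_m)\ge\lceil m/2\rceil$. It then remains to verify the elementary inequality $\lceil m/2\rceil\ge2\lceil m/6\rceil=2\gamma_{1/2}(P_m)$; writing $m=6q+r$ with $0\le r\le5$ this is a one-line case check, valid for all $m\ge3$. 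For the leftover values, $m=1$ is immediate because $G\square P_1\cong G$ gives $\gamma_{1/2}(G\square P_1)=\gamma_{1/2}(G)=2=2\gamma_{1/2}(P_1)$, while for $m=2$ the degree bound is too weak (when $n=3$, forcing $G=\overline{K_3}$, it only yields $\ge1$) and one instead quotes the earlier proposition $\gamma_{1/2}(G\square P_2)\ge\gamma_{1/2}(G)=2=2\gamma_{1/2}(P_2)$.

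The only place where care is genuinely needed — the ``main obstacle'', such as it is — is to resist simplifying the degree bound all the way down to $\lceil m/3\rceil$: that estimate is false for the comparison, since $\lceil m/3\rceil=2\lceil m/6\rceil-1$ exactly when $m\equiv1\pmod 6$. Keeping the constant $\tfrac12$ rather than $\tfrac13$ (which the hypothesis $\gamma_{1/2}(G)=2$, i.e.\ $|N_G[v]|\le\lceil n/2\rceil-1$, is precisely strong enough to supply) is what makes the comparison go through, and one must remember that the blunt bound misses $m\in\{1,2\}$, which are then handled by hand as above.
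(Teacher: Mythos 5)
Your argument is essentially the paper's: both extract $|N_G[v]|\le\lceil n/2\rceil-1$ from $\gamma_{1/2}(G)=2$, bound closed neighborhoods in $G\square P_m$ by $\lceil n/2\rceil+1$, and divide $mn/2$ by that to compare with $2\lceil m/6\rceil$. The only difference is the endgame: the paper weakens the bound to $\lceil mn/(n+3)\rceil\ge\lceil 0.7m\rceil$ via the claim $n\ge 7$ (which really needs connectivity), whereas you use $\lceil n/2\rceil+1\le n$ to get $\lceil m/2\rceil$ for all $n\ge 3$ and then patch $m=2$ with the earlier $G\square P_2$ proposition --- a slightly more robust version of the same computation, and your warning about not degrading the bound to $\lceil m/3\rceil$ is well taken.
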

	
	\begin{proof}
		Let $|V(G)| = n.$ Since $\gamma_{1/2}(G) = 2,$ then $\max\{|N[v]| : v \in V(G)\} \leq \lceil \frac{n}{2} \rceil - 1.$ Therefore, $\max\{|N[v]| : v \in V(G \square P_m)\} \leq \lceil \frac{n}{2} \rceil + 1.$
		
		Thus we have 
		\[       \gamma_{1/2}(G \square P_m) \geq \left\lceil \frac{mn}{2\left( \lceil \frac{n}{2} \rceil + 1  \right)} \right\rceil \geq \left\lceil \frac{mn}{n+3} \right\rceil. \]
		
		Now, since $\gamma_{1/2}(G) = 2,$ we must have $n \geq 7.$ Thus $\gamma_{1/2}(G \square P_m) \geq \lceil 0.7m \rceil$.
		
		Now say $m=6a + b.$ Then 
		\[  \gamma_{1/2}(P_m) = \left\lceil \frac{m}{6} \right \rceil = \begin{cases} a & b = 0 \\ a+1 & b \neq 0  \end{cases}.  \]
		
		Thus for all $m \geq 2$,
		\[ \gamma_{1/2}(G \square P_m) \geq  \lceil 0.7m \rceil = \lceil 4.2a + 0.7b \rceil \geq 2(a+1) \geq 2\gamma_{1/2}(P_m). \] 
	\end{proof}

	\begin{proposition}
		If $\gamma_{1/2}(G) =  3,$ then 
		\[ \gamma_{1/2}(G \square P_m) \geq 3 \gamma_{1/2}(P_m).		\]
	\end{proposition}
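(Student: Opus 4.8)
The plan is to follow the template of the two preceding propositions: bound the largest closed neighborhood of $G$, inflate it by $2$ to pass to $G \square P_m$, and compare the resulting lower bound for $\gamma_{1/2}(G \square P_m)$ with $3\gamma_{1/2}(P_m) = 3\lceil m/6 \rceil$ (using $\gamma_{1/2}(P_m) = \lceil m/6 \rceil$ from Theorem 2.8 of \cite{CHLL17}). Write $n = |V(G)|$ and $\Delta^* = \max_{v \in V(G)} |N[v]|$. The first and key step is to show that $\gamma_{1/2}(G) = 3$ forces $\Delta^* \le \lceil n/2 \rceil - 2$, one notch sharper than the bound used in the $\gamma_{1/2}(G) = 2$ case. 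Indeed, if some $v$ had $|N[v]| \ge \lceil n/2 \rceil$ then $\{v\}$ alone would be a $1/2$-dominating set, contradicting $\gamma_{1/2}(G) > 1$; and if $|N[v]| = \lceil n/2 \rceil - 1$, then choosing any $w \notin N[v]$ (which exists since $\lceil n/2 \rceil - 1 < n$) gives $|N[v] \cup N[w]| \ge \lceil n/2 \rceil$, so $\{v,w\}$ is a $1/2$-dominating set of size $2$, contradicting $\gamma_{1/2}(G) > 2$. Since $\Delta^* \ge 1$, this bound also forces $\lceil n/2 \rceil \ge 3$, i.e.\ $n \ge 5$.

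Next I would transfer this to the product. Every vertex of $G \square P_m$ has closed neighborhood of size at most $\Delta^* + 2 \le \lceil n/2 \rceil$, and $G \square P_m$ has $mn$ vertices, so any $1/2$-dominating set has size at least $mn/(2\lceil n/2 \rceil)$; since $2\lceil n/2 \rceil \le n+1$ and $n \ge 5$,
\[ \gamma_{1/2}(G \square P_m) \;\ge\; \left\lceil \frac{mn}{2\lceil n/2 \rceil} \right\rceil \;\ge\; \left\lceil \frac{mn}{n+1} \right\rceil \;\ge\; \left\lceil \frac{5m}{6} \right\rceil. \]
It then remains to check $\lceil 5m/6 \rceil \ge 3\lceil m/6 \rceil$. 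Writing $m = 6q + r$ with $0 \le r \le 5$ one has $\lceil 5m/6 \rceil = 5q + r$, while $3\lceil m/6 \rceil$ equals $3q$ for $r = 0$ and $3q + 3$ otherwise; for $r \neq 0$ the inequality reduces to $2q + r \ge 3$, which holds whenever $q \ge 1$ or $r \ge 3$, i.e.\ for every $m \ge 3$. The residual cases are immediate: $m = 1$ is trivial since $G \square P_1 \cong G$ and $\gamma_{1/2}(G) = 3 = 3\gamma_{1/2}(P_1)$, and for $m = 2$ the earlier proposition gives $\gamma_{1/2}(G \square P_2) \ge \gamma_{1/2}(G) = 3 = 3\gamma_{1/2}(P_2)$.

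I expect the only genuine obstacle to be the sharpened degree estimate $\Delta^* \le \lceil n/2 \rceil - 2$ in the first step: this ``$-2$'' (rather than ``$-1$'') is exactly what raises the effective lower bound from roughly $m/2$, which would be too weak, to roughly $5m/6$, which beats the target $3 \cdot \tfrac16 \cdot m$. Everything afterwards is routine floor/ceiling bookkeeping, and the small values $m \in \{1,2\}$, although not covered by that bookkeeping, fall out of triviality and of the previously established inequality $\gamma_{1/2}(G \square P_2) \ge \gamma_{1/2}(G)$.
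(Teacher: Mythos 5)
Your proof is correct and follows essentially the same route as the paper's: bound $\max_{v}|N[v]|$ using $\gamma_{1/2}(G)=3$, add $2$ to pass to $G\square P_m$, and compare the resulting counting bound with $3\lceil m/6\rceil$ via $m=6q+r$, treating $m=2$ separately. The only difference is quantitative: you prove the fully justified bound $\max_{v}|N[v]|\le\lceil n/2\rceil-2$ with $n\ge 5$, whereas the paper asserts the sharper $\max_{v}|N[v]|\le\lceil n/2\rceil-3$ with $n\ge 13$; your weaker constants still yield $\gamma_{1/2}(G\square P_m)\ge\lceil 5m/6\rceil$, which suffices.
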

	
	\begin{proof}
		Let $|V(G)|=n.$ Since $\gamma_{1/2}(G) = 3,$ we must have $n \geq 13.$ Moreover, we have $\max \{ |N[v]|: v \in V(G) \} \leq \left \lceil\frac{n}{2} \right\rceil -3.$ Thus, $\max \{ |N[v]|: v \in V(G\square P_m) \} \leq \left \lceil\frac{n}{2} \right\rceil -1 .$
		
		Therefore we have 
		\[  \gamma_{1/2}(G \square P_m) \geq \left\lceil \frac{mn}{2\left( \left\lceil \frac{n}{2} \right\rceil -1 \right) } \right\rceil \geq  \left\lceil \frac{mn}{n-1} \right\rceil  \geq m .\]
		
		Now say $m=6a + b.$ Then 
		\[  \gamma_{1/2}(P_m) = \left\lceil \frac{m}{6} \right \rceil = \begin{cases} a & b = 0 \\ a+1 & b \neq 0  \end{cases} . \]
		
		Thus for $m \geq 3,$
		\[  \gamma_{1/2}(G \square P_m) \geq m = 6a + b \geq 3(a + 1) \geq 3\gamma_{1/2}(P_m). \]
		
		Lastly, when $m = 2,$ we have 
		\[ \gamma_{1/2}(G \square P_2) \geq \left\lceil \frac{n}{ \left\lceil \frac{n}{2} \right\rceil -1 } \right\rceil = 3 = 3 \gamma_{1/2}(P_2). \] 
		
	\end{proof}
	
	%
	%
	%
	%

	%

	
	\section{Locating Partial Dominating Sets in Graphs} \label{Sec:locating}
	When we look a graph we want some tools (theorems and/or algorithms) that will help us locate a partial dominating set or elements of a partial dominating set. A $\gamma_p$ set does not have to be unique; and for many applications having anyone of them would work.  The first intuitive idea in looking for a partial dominating set is to consider vertices with high degrees. The following several results will explore what can and cannot be achieved following this idea. 
	
	\begin{lemma}
		For any $p\in [0,1]$, if a vertex $v \in G$ has the highest degree, then there is a $\gamma_p$-set that contains at least one of the following:
		\begin{itemize}
			\item[1)] $v$,
			\item[2)] a neighbor of $v$, i.e. an element of $N(v)$,
			\item[3)] a distance two neighbor of $v$, i.e. an element of $N(N[v])$.
		\end{itemize} 
	\end{lemma}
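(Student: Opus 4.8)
The plan is to argue by a single local exchange on an arbitrary minimum $p$-dominating set. Fix a vertex $v$ of maximum degree and let $S$ be any $\gamma_p$-set. If $S$ already contains $v$, a vertex of $N(v)$, or a vertex of $N(N[v])$, there is nothing to prove, so I would assume $S$ is disjoint from $\{v\}\cup N(v)\cup N(N[v])$, i.e.\ from the set of all vertices at distance at most $2$ from $v$.

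The first step is to record the crucial consequence of this assumption: $N[v]\cap N[S]=\emptyset$. Indeed, $v\notin N[S]$ because neither $v$ nor any neighbor of $v$ lies in $S$; and if a neighbor $x$ of $v$ were dominated by some $u\in S$, then $u$ would be within distance $2$ of $v$, contradicting the assumption. Hence $S$ dominates none of the $\deg(v)+1$ vertices of $N[v]$.

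Next comes the exchange. Pick any $w\in S$ (possible since we may assume $\gamma_p(G)\ge 1$; the case $\gamma_p(G)=0$ forces $p=0$ and $S=\emptyset$ and should be noted separately) and set $S'=(S\setminus\{w\})\cup\{v\}$, so that $|S'|\le|S|$ since $v\notin S$. Because $N[v]$ is disjoint from $N[S]\supseteq N[S\setminus\{w\}]$, the closed neighborhood of $S'$ splits as a disjoint union, giving $|N[S']|=|N[S\setminus\{w\}]|+|N[v]|$. Using $|N[S\setminus\{w\}]|\ge|N[S]|-|N[w]|$ together with $|N[v]|\ge|N[w]|$ — this last inequality is exactly where maximality of $\deg(v)$ is used — I get $|N[S']|\ge|N[S]|\ge p|V|$. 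Thus $S'$ is a $p$-dominating set of cardinality at most $\gamma_p(G)$, hence a $\gamma_p$-set, and it contains $v$, which in particular yields option (1) and proves the lemma.

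I expect the only real care-points to be: (i) the degenerate case $\gamma_p(G)=0$ mentioned above, where the statement is vacuous or one simply assumes $p>0$; and (ii) establishing the disjointness $N[v]\cap N[S]=\emptyset$ correctly, which is the heart of the argument and is precisely why option (3), distance-two neighbors, must appear in the hypothesis: excluding only $v$ and $N(v)$ from $S$ would not prevent a neighbor of $v$ from being dominated from outside $N[v]$, and then the identity $|N[S']|=|N[S\setminus\{w\}]|+|N[v]|$ — and with it the whole count — would fail.
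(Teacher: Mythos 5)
Your proof is correct and takes essentially the same route as the paper's: exchange an arbitrary vertex of a $\gamma_p$-set for the maximum-degree vertex $v$ and check that domination does not decrease. In fact your write-up is more careful than the paper's, since you explicitly establish the disjointness $N[v]\cap N[S]=\emptyset$ that makes the swap non-lossy (and flag the vacuous case $p=0$), points the paper's proof leaves implicit.
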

	\begin{proof}
		Suppose none of these vertices is in any $\gamma_p$ set. Let $v\in G$ be a vertex of highest degree.  If $S\subseteq G$ is a $p$-dominating set of $G$, consider any $s\in S$. The degree of $s$ is less than or equal to the degree of $v$. So the set 
		\[S'= \{v\} \cup (S \setminus \{s\}) \]
		dominates at least as many vertices as $S$. Thus, $S'$ is a $p$-dominating set that contains $v$. 
	\end{proof}
	
	Now we illustrate with some examples that in the preceding theorem, it may be the case that 2) or 3) hold and not 1).  
	
	\begin{example}
		Let $p = 8/9$ consider the graph in Figure \ref{fig:case3}. The set of boxed vertices is the only $p-$dominating set.  This shows that the highest degree vertex may not be in any of the $p$-dominating sets of the graph but that instead some of its distance 2 neighbors are.  
		
		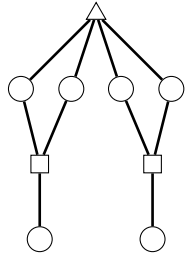
\begin{figure}[hh]
			\centering
			\begin{tikzpicture}
			\node [draw,regular polygon, regular polygon sides=3,inner sep=1.5pt] (A) at (0,1.5) {};
			\node [draw,circle] (B) at (-1,.5) {};
			\node [draw,circle] (C) at (-.33,.5) {};
			\node [draw,circle] (D) at (.33,.5) {};
			\node [draw,circle] (E) at (1,.5) {};
			\node [draw,rectangle] (F) at (-.75,-.5) {};
			\node [draw,rectangle] (G) at (.75,-.5) {};
			\node [draw,circle] (H) at (-.75,-1.5) {};
			\node [draw,circle] (I) at (.75,-1.5) {};
			
			\draw[line width=1pt] (A) edge (B);
			\draw[line width=1pt] (A) edge (C);
			\draw[line width=1pt] (A) edge (D);
			\draw[line width=1pt] (A) edge (E);
			\draw[line width=1pt] (B) edge (F);
			\draw[line width=1pt] (C) edge (F);
			\draw[line width=1pt] (D) edge (G);
			\draw[line width=1pt] (E) edge (G);
			\draw[line width=1pt] (F) edge (H);
			\draw[line width=1pt] (G) edge (I);
			\end{tikzpicture}
			\caption{The highest degree vertex $v$ (triangle) is not in any $\gamma_{8/9}$-set; rather the distance 2 neighbors of $v$ (rectangles) form the only $\gamma_{8/9}$-set. \label{fig:case3}}
		\end{figure}
	\end{example}
	\begin{example}
		Let $p = 7/9$ and consider the graph in Figure \ref{fig:case2}. The triangle vertex, $v$, is a maximum degree vertex in the graph. Any two vertices chosen from $N(v)$ form a $\gamma_{7/9}$-set. Thus it may be that a highest degree vertex is not in any $p$-dominating set, but that instead some its neighbors are.
		
		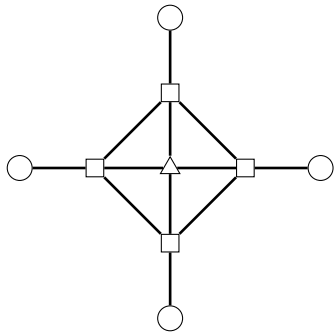
\begin{figure}[hhh]
			\centering
			\begin{tikzpicture}
			\node [draw,regular polygon, regular polygon sides=3,inner sep=1.5pt] (A) at (0,0) {};
			\node [draw,rectangle] (B) at (1,0) {};
			\node [draw,circle] (C) at (2,0) {};
			\node [draw,rectangle] (D) at (0,1) {};
			\node [draw,circle] (E) at (0,2) {};
			\node [draw,rectangle] (F) at (-1,0) {};
			\node [draw,circle] (G) at (-2,0) {};
			\node [draw,rectangle] (H) at (0,-1) {};
			\node [draw,circle] (I) at (0,-2) {};
			
			\draw[line width=1pt] (A) edge (B);
			\draw[line width=1pt] (B) edge (C);
			\draw[line width=1pt] (A) edge (D);
			\draw[line width=1pt] (D) edge (E);
			\draw[line width=1pt] (A) edge (F);
			\draw[line width=1pt] (F) edge (G);
			\draw[line width=1pt] (A) edge (H);
			\draw[line width=1pt] (H) edge (I);
			\draw[line width=1pt] (B) edge (D);
			\draw[line width=1pt] (D) edge (F);
			\draw[line width=1pt] (F) edge (H);
			\draw[line width=1pt] (H) edge (B);
			\end{tikzpicture}
			\caption{A highest degree vertex $v$ (triangle) is not in any $\gamma_{7/9}$-set; rather from the neighbors of $v$, any two together form a $\gamma_{7/9}$-set. \label{fig:case2}}
		\end{figure}
	\end{example}
	
	
	
	\begin{lemma}
		Let $p\in [0,1]$ be fixed. If $v\in V$ has highest degree and $v$ is not in any $p$-dominating set, then $|S\cap N[N[v]]|\geq 2$ for every $\gamma_p$-set $S.$ 
	\end{lemma}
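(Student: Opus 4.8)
The plan is to argue by contradiction, reusing the ``swap in a maximum-degree vertex'' idea from the previous lemma and adding one small geometric observation. The observation is that $N[N[v]]$ is exactly the ball of radius two about $v$: a vertex $x$ lies in $N[N[v]]$ if and only if $\mathrm{dist}(x,v)\le 2$. Hence for every vertex $w\notin N[N[v]]$ one has $N[w]\cap N[v]=\emptyset$, because a vertex lying in both $N[w]$ and $N[v]$ would certify $\mathrm{dist}(w,v)\le 2$. I would also record at the outset that we may assume $p>0$, so that every $\gamma_p$-set is nonempty; for $p=0$ the only $\gamma_p$-set is $\emptyset$ and that case is understood to be excluded.

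Now assume, toward a contradiction, that some $\gamma_p$-set $S$ has $|S\cap N[N[v]]|\le 1$. I would choose a vertex $s\in S$ with $(S\setminus\{s\})\cap N[N[v]]=\emptyset$: let $s$ be the unique vertex of $S\cap N[N[v]]$ if there is one, and an arbitrary vertex of $S$ otherwise. By the observation above, $N[w]\cap N[v]=\emptyset$ for each $w\in S\setminus\{s\}$, so $N[S\setminus\{s\}]\cap N[v]=\emptyset$. Set $S'=(S\setminus\{s\})\cup\{v\}$; since $v$ lies in no $\gamma_p$-set it is not in $S$, so $|S'|=|S|=\gamma_p(G)$. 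Using that $v$ has maximum degree we have $|N[v]|\ge|N[s]|$, and therefore
\[ |N[S']| \;=\; |N[v]|+|N[S\setminus\{s\}]| \;\ge\; |N[s]|+|N[S\setminus\{s\}]| \;\ge\; |N[S]| \;\ge\; p|V|, \]
the last union estimate being mere subadditivity. Thus $S'$ is a $p$-dominating set of minimum cardinality, i.e.\ a $\gamma_p$-set, and it contains $v$ — contradicting the hypothesis. Hence every $\gamma_p$-set meets $N[N[v]]$ in at least two vertices.

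I do not expect a genuine obstacle: the core move is the same vertex swap used before, now applied only after stripping off the at most one vertex of $S$ that lies within distance two of $v$. The two points that need a little care are, first, ruling out $p=0$ so that $S$ is guaranteed nonempty and the choice of $s$ is not vacuous; and second, handling the two subcases $S\cap N[N[v]]=\emptyset$ and $|S\cap N[N[v]]|=1$ with a single choice of $s$, which is exactly what is done above. It is also worth keeping the reading of the statement in mind — the two vertices need not lie in $N(v)$ or even be dominated by $v$, as Figure~\ref{fig:case3} illustrates with a graph whose only $\gamma_p$-set consists of two genuine distance-two neighbours of $v$.
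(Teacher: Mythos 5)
Your proof is correct and follows essentially the same contradiction-and-swap strategy as the paper's; where the paper splits the $|S\cap N[N[v]]|=1$ case into the subcases $u\in N(v)$ and $u\in N(N(v))$, you handle both uniformly by stripping off the single vertex of $S$ within distance two of $v$. Your explicit use of the disjointness $N[S\setminus\{s\}]\cap N[v]=\emptyset$ to justify $|N[S']|\geq |N[S]|$ supplies the counting detail that the paper's proof only asserts, so this is, if anything, a slightly more careful write-up of the same argument.
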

	\begin{proof}
		Prove by contradiction; suppose $ |S\cap N[N[v]]| < 2$ for some $\gamma_p$-set $S.$ 
		In the case $|S\cap N[N[v]]|=0$, we can swap any $s\in S$ with $v$ to make a $p$-dominating set since $v$ has highest degree and none of its neighbors would be dominated. This contradicts $v$ not being in any $p$-dominating set. 
		
		In the case $|S\cap N[N[v]]|=1$, suppose $\{u\} = S \cap N[N[v]]$. 
		In the case $u \in N(v)$, we can swap $u$ and $v$ and still have a $p$-dominating set. 
		In the case $u \in N(N(v))$, we can again swap $u$ and $v$ and still have a $p$-dominating set.  In either case this again contradicts $v$ not being in any $p$-dominating set. 
	\end{proof}
	
	These results and examples together show that being greedy for the highest degree vertices does not work by itself in finding you elements from a $p$-dominating set, but that this greedy mindset can get you looking in the right area of the graph.  Observe that neither of graphs in Figures \ref{fig:case3} or \ref{fig:case2} were trees.  The graph if Figure \ref{fig:treecounterex} also shows that in a tree, a highest degree vertex need not be in a $p$-dominating set.
	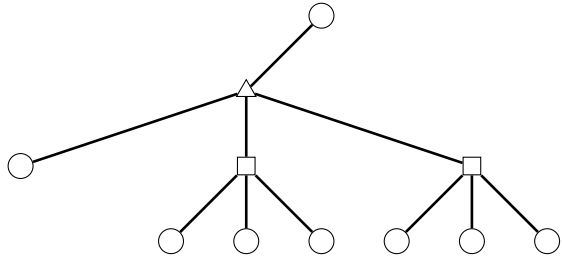
\begin{figure}[hhh]
		\centering
		\begin{tikzpicture}
		\node [draw,regular polygon, regular polygon sides=3,inner sep=1.5pt] (A) at (0,0) {};
		\node [draw,circle] (B) at (-3,-1) {};
		\node [draw,rectangle] (C) at (0,-1) {};
		\node [draw,rectangle] (D) at (3,-1) {};
		\node [draw,circle] (E) at (-1,-2) {};
		\node [draw,circle] (F) at (0,-2) {};
		\node [draw,circle] (G) at (1,-2) {};
		\node [draw,circle] (H) at (2,-2) {};
		\node [draw,circle] (I) at (3,-2) {};
		\node [draw,circle] (J) at (4,-2) {};
		\node [draw,circle] (K) at (1,1) {};
		
		\draw[line width=1pt] (A) edge (B);
		\draw[line width=1pt] (A) edge (C);
		\draw[line width=1pt] (A) edge (D);
		\draw[line width=1pt] (A) edge (K);
		\draw[line width=1pt] (C) edge (E);
		\draw[line width=1pt] (C) edge (F);
		\draw[line width=1pt] (C) edge (G);
		\draw[line width=1pt] (D) edge (H);
		\draw[line width=1pt] (D) edge (I);
		\draw[line width=1pt] (D) edge (J);
		\end{tikzpicture}
		\caption{A highest degree vertex $v$ (triangle) is not in any $\gamma_{9/11}$-set; rather two of its neighbors (rectangles)  form a $\gamma_{9/11}$-set. \label{fig:treecounterex}}
	\end{figure}

	\section{$p$-Influencing Set} \label{Sec:pinfluencing}
	Now we introduce a related definition by considering the union of all $p$-dominating sets of a graph. We will call this union the influencing set. 
	
	\begin{definition}
		Let $p\in [0,1]$ be fixed.  The union of all $\gamma_p$-sets of $G$ is called the \emph{$p-$influencing set} of $G$. 
	\end{definition}
	Note that there are $n = |V|$ interesting proportions $p$ that can be considered for a graph 
	\[  {1/n, 2/n,3/n,...,1}.\]
	Also we've allowed $p=0$, but here the $p$-dominating set is just $\emptyset$.
	For the smallest of these above $p=1/n$ the $p$-influencing set is all of $V$. 
	\begin{lemma}
		Let $p=1/n$, then the $p$-influencing set of $G$ is all of $V$.  If $G$ is a connected graph and $p = 2/n$, then the $p$-influencing set of $G$ is all of $V$.
	\end{lemma}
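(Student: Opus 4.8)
The plan is to determine $\gamma_p(G)$ explicitly in each of the two cases and to check that every singleton $\{v\}$ with $v\in V$ is itself a $\gamma_p$-set; the union over all $v$ is then $V$, which is exactly the assertion about the $p$-influencing set.

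First I would handle $p=1/n$. Here the defining inequality $|N[S]|/n\ge 1/n$ is simply $|N[S]|\ge 1$. For any $v\in V$ we have $v\in N[v]=N[\{v\}]$, so every singleton is a $1/n$-dominating set, whereas $\emptyset$ is not (it dominates $0$ vertices). Hence $\gamma_{1/n}(G)=1$, the $\gamma_{1/n}$-sets are exactly the sets $\{v\}$ for $v\in V$, and their union is $V$. Connectivity plays no role in this part.

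Next I would treat $p=2/n$ with $G$ connected. Now the inequality reads $|N[S]|\ge 2$. (If $n=1$ the claim is vacuous, since $2/n=2\notin[0,1]$, so one may assume $n\ge 2$.) The only place connectivity enters is the observation that a connected graph on at least two vertices has no isolated vertex, so $\deg(v)\ge 1$ and therefore $|N[\{v\}]|=\deg(v)+1\ge 2$ for every $v$. Thus each singleton is again a $2/n$-dominating set, no smaller set works, so $\gamma_{2/n}(G)=1$, every singleton is a $\gamma_{2/n}$-set, and the $2/n$-influencing set equals $\bigcup_{v\in V}\{v\}=V$.

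I do not expect any genuine obstacle here; the only points requiring a little care are (i) that a $\gamma_p$-set is required to have minimum cardinality, so one must verify that $1$ really is the minimum, i.e.\ that $\emptyset$ fails in each case, and (ii) in the second part, that the connectivity hypothesis is precisely what rules out isolated vertices. This last point is sharp: for a disconnected graph such as $K_2\cup K_1$ on $n=3$ vertices with $p=2/3$, one still has $\gamma_{2/3}=1$, but the isolated vertex lies in no $\gamma_{2/3}$-set, so the conclusion fails without connectivity.
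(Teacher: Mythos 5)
Your proof is correct and follows essentially the same route as the paper's: show every singleton is a $p$-dominating set (using connectivity to guarantee a neighbor in the $2/n$ case) and conclude the union is $V$. You are in fact slightly more careful than the paper, since you explicitly verify that $\gamma_p(G)=1$ (i.e., that $\emptyset$ fails), which is needed because the influencing set is the union of \emph{minimum} $p$-dominating sets, not of all $p$-dominating sets.
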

	\begin{proof}
		For the first, every individual vertex in $V$ is a $1/n$-dominating set. Thus the union of all $1/n$-dominating sets contains every vertex in $V$. 
		
		For the second, every individual vertex in $V$ is a $2/n$-dominating set, since it dominates itself and at least one neighbor. Thus the union of all $2/n$-dominating sets is all of $V$. 
	\end{proof}
	This lemma can be generalized further as follows.
	\begin{lemma}
		Let $\delta(G)$ be the smallest degree of any vertex in $G$. If $p \leq  \frac{\delta(G)+1}{n}$, then the $p$-influencing set of $G$ is all of $V$. 
	\end{lemma}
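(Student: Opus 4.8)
The plan is to show that for \emph{every} vertex $v \in V$ the singleton $\{v\}$ is itself a $\gamma_p$-set; since the $p$-influencing set is by definition the union of all $\gamma_p$-sets, this immediately forces it to contain all of $V$.

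First I would observe that every vertex $v$ satisfies $|N[v]| = \deg(v) + 1 \geq \delta(G) + 1$, simply because $\deg(v) \geq \delta(G)$ by the definition of the minimum degree. Dividing by $n = |V|$ and invoking the hypothesis $p \leq \frac{\delta(G)+1}{n}$ gives $\frac{|N[v]|}{|V|} \geq \frac{\delta(G)+1}{n} \geq p$, so $\{v\}$ is a $p$-dominating set of $G$. This is exactly the computation behind the two special cases $p = 1/n$ and $p = 2/n$ in the previous lemma, now carried out in general.

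Next I would argue that $\{v\}$ is of minimum cardinality among $p$-dominating sets, hence genuinely a $\gamma_p$-set. For $p$ among the interesting proportions $1/n, 2/n, \dots, 1$ (equivalently $p > 0$), no $p$-dominating set can be empty, since $N[\emptyset] = \emptyset$ and $\frac{0}{n} = 0 < p$; thus $\gamma_p(G) \geq 1$, and since this lower bound is met by $\{v\}$ we get $\gamma_p(G) = 1$ and $\{v\}$ is a $\gamma_p$-set. (The degenerate value $p = 0$ is implicitly excluded here, since the only $0$-dominating set is $\emptyset$ and the $0$-influencing set is $\emptyset$.)

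Finally, since $v$ was an arbitrary vertex, every vertex of $G$ lies in some $\gamma_p$-set, so the union of all $\gamma_p$-sets — the $p$-influencing set — equals $V$. I do not anticipate any real obstacle: the statement is a direct strengthening of the preceding lemma, and the only points requiring a moment of care are the trivial exclusion of $p = 0$ and the remark that a $p$-dominating set of size one is automatically of minimum size.
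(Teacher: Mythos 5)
Your proof is correct and follows essentially the same route as the paper's one-line argument: every vertex $v$ satisfies $|N[v]| \geq \delta(G)+1 \geq pn$, so each singleton is a $\gamma_p$-set and the union of all of them is $V$. You simply spell out the minimality of $\{v\}$ and the exclusion of $p=0$ (which the paper's statement technically overlooks), so no further changes are needed.
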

	\begin{proof}
		Every vertex is a $ \frac{\delta(G)+1}{n}$ dominating set, thus the $\frac{\delta(G)+1}{n}$-influencing set is all of $G$.
	\end{proof}
	As we see above, for the smaller interesting proportions the $p$-influencing set is as large as possible. One might ask if the other $p$-influencing sets have any containment properties as the proportion increases or decreases, or if the size of $p$-influencing set only decreases as $p$ increases. In general, this does not happen. Considering the graph in Figure \ref{fig:case3}, one can see that as $p$ runs from $1/9$ to 1 the $p-$influencing sets change from all the vertices down to one vertex and then back to all the vertices. Also in this example the intersection of all the $p$-influencing sets with $p>0$ is $\emptyset$.
	
	\begin{lemma}
		Let $\Delta(G)$ be the highest degree of any vertex in $G$. If $p=\frac{\Delta(G)+1}{n}$, then the $p$-influencing set is made up of exactly those vertices in $G$ with degree equal to $\Delta(G)$.
	\end{lemma}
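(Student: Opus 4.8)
The plan is to show that under the hypothesis $p=\frac{\Delta(G)+1}{n}$ every $\gamma_p$-set is a single vertex, and then to characterize exactly which single vertices qualify. Since $p\cdot n = \Delta(G)+1$ is an integer, the condition for a set $S$ to be $p$-dominating is simply $|N[S]|\ge \Delta(G)+1$.

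First I would establish that $\gamma_p(G)=1$. Pick any vertex $w$ with $\deg(w)=\Delta(G)$; such a vertex exists by definition of $\Delta(G)$. Then $|N[w]| = \deg(w)+1 = \Delta(G)+1 = pn$, so $\{w\}$ is a $p$-dominating set, giving $\gamma_p(G)\le 1$; and $\gamma_p(G)\ge 1$ trivially (for $p>0$, which holds here since $\Delta(G)+1\ge 1$ and in a graph with an edge $\Delta(G)\ge 1$; in the edgeless case $\Delta(G)=0$ and $p=1/n$, already handled by an earlier lemma, but the argument below still goes through). Hence $\gamma_p(G)=1$ and every $\gamma_p$-set has the form $\{v\}$ for a single vertex $v$.

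Next I would determine which singletons $\{v\}$ are $\gamma_p$-sets. Since $\gamma_p(G)=1$, $\{v\}$ is a $\gamma_p$-set if and only if it is a $p$-dominating set, i.e. if and only if $|N[v]| = \deg(v)+1 \ge \Delta(G)+1$, which is equivalent to $\deg(v)\ge\Delta(G)$, hence to $\deg(v)=\Delta(G)$ (as no vertex exceeds $\Delta(G)$). Therefore $v$ lies in some $\gamma_p$-set precisely when $\deg(v)=\Delta(G)$, and the $p$-influencing set, being the union of all $\gamma_p$-sets, equals $\{\,v\in V : \deg(v)=\Delta(G)\,\}$, as claimed.

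I do not expect a genuine obstacle here; the only points requiring a little care are (i) noting that $pn$ is exactly an integer so the ceiling in the definition of a $p$-dominating set plays no role, and (ii) confirming $\gamma_p(G)=1$ so that "belongs to a $\gamma_p$-set" collapses to "forms a $\gamma_p$-set by itself." Both are immediate from the computation $|N[w]|=\deg(w)+1$.
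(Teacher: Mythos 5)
Your proposal is correct and follows essentially the same route as the paper's proof: a maximum-degree vertex alone is a $p$-dominating set (so $\gamma_p(G)=1$), and a singleton $\{v\}$ is $p$-dominating exactly when $\deg(v)=\Delta(G)$, so the union of all $\gamma_p$-sets is precisely the set of maximum-degree vertices. Your version is just slightly more explicit about why $pn=\Delta(G)+1$ being an integer makes the threshold exact.
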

	\begin{proof}
		When $p=\frac{\Delta(G)+1}{n}$, each vertex of degree $\Delta(G)$ is a $p$-dominating set. Furthermore, there are no $p$-dominating sets with more than one vertex, since they would not be of minimum size. Thus, the $p$-influencing set consists exactly of the degree $\Delta(G)$ vertices.  
	\end{proof}
	
	The previous results gave the $p$-influencing sets of any graph for a fixed $p.$ We now find all $p$-influencing sets for two common graphs, namely complete bipartite graphs and paths.
	
	\begin{proposition} Consider the complete bipartite graph $K_{m,n} = (V_1, V_2, E)$ with $|V_1| = m, |V_2| = n,$ and $m > n.$ The $p$-influencing sets of $K_{m,n}$ are
		\begin{itemize}
			\item $V_1\cup V_2$ for $0 < p \leq \frac{n+1}{m+n}$ and $p \geq \frac{m+2}{m+n}$
			\item $V_2$ for $\frac{n+2}{m+n} \leq p \leq \frac{m+1}{m+n}$
		\end{itemize}
		Moreover, if $m=n,$ then the $p-$influencing set of $K_{m,n}$ is $V_1 \cup V_2$ for all $p > 0.$
	\end{proposition}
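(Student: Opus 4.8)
The plan is to determine, for each proportion $p$, the complete list of $\gamma_p$-sets of $K_{m,n}$ and then take their union. First I would record the elementary neighborhood data. Writing the order as $m+n$, a single vertex of $V_1$ has closed neighborhood $V_2\cup\{v\}$ and so dominates $n+1$ vertices, while a single vertex of $V_2$ dominates $m+1$ vertices; a pair of vertices lying in $V_1$ dominates $V_2$ together with those two vertices, i.e.\ $n+2$ vertices, a pair lying in $V_2$ dominates $m+2$ vertices, and a mixed pair $\{u,w\}$ with $u\in V_1$, $w\in V_2$ dominates all of $V_1\cup V_2$. In particular a mixed pair is a dominating set, so $\gamma_p(K_{m,n})\le 2$ for every $p$, and a set $S$ is $p$-dominating exactly when $|N[S]|\ge \lceil p(m+n)\rceil$.

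Assume first $m>n$, so $\delta(K_{m,n})=n$ and $\Delta(K_{m,n})=m$. I would split into the three ranges appearing in the statement. For $0<p\le\frac{n+1}{m+n}$ we have $\lceil p(m+n)\rceil\le n+1$, so every single vertex is a $p$-dominating set; this is the earlier lemma about $\frac{\delta(G)+1}{|V|}$, and the $p$-influencing set is $V_1\cup V_2$. For $\frac{n+2}{m+n}\le p\le\frac{m+1}{m+n}$ we have $n+2\le\lceil p(m+n)\rceil\le m+1$; a single vertex of $V_2$ then dominates enough vertices while a single vertex of $V_1$ dominates only $n+1<n+2$, so $\gamma_p=1$ and the $\gamma_p$-sets are exactly the singletons $\{w\}$ with $w\in V_2$, giving $p$-influencing set $V_2$. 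For $p\ge\frac{m+2}{m+n}$ we have $\lceil p(m+n)\rceil\ge m+2$, so no single vertex suffices and $\gamma_p=2$; a pair inside $V_1$ dominates only $n+2<m+2$ vertices (here the strict inequality $m>n$ is used), so it is never a $\gamma_p$-set, whereas every mixed pair is, and as $u$ and $w$ range over $V_1$ and $V_2$ (both nonempty) the union of these pairs is already $V_1\cup V_2$. Hence the $p$-influencing set is $V_1\cup V_2$, whether or not some pairs inside $V_2$ also happen to be $\gamma_p$-sets (which occurs only at $p=\frac{m+2}{m+n}$). Since the three ranges cover all the interesting proportions $j/(m+n)$ consecutively, nothing is left out.

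For the case $m=n$, I would argue the same way with $\delta=\Delta=n$: for $p\le\frac{n+1}{2n}$ the first-range argument gives influencing set $V_1\cup V_2$, and for $p>\frac{n+1}{2n}$ no singleton is $p$-dominating, so $\gamma_p=2$, every mixed pair is a $\gamma_p$-set, and the union of the mixed pairs is again $V_1\cup V_2$; so the $p$-influencing set is $V_1\cup V_2$ for all $p>0$.

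The main thing to get right is the ceiling-function bookkeeping that makes the constants $n+1$, $n+2$, $m+1$, $m+2$ line up so that the three ranges genuinely partition the interesting proportions, and, in the top range, resisting the temptation to conclude that the influencing set shrinks: the resolution is that the mixed pairs alone already sweep out $V_1\cup V_2$, so any additional same-side $\gamma_p$-sets are irrelevant to the union. Beyond this careful case analysis there is no serious obstacle.
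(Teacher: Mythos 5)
Your proposal is correct and follows essentially the same three-range case analysis as the paper's proof, just with the neighborhood counts and ceiling bookkeeping written out explicitly where the paper is terser (e.g.\ in justifying that no vertex of $V_1$ lies in a $\gamma_p$-set in the middle range, and that mixed pairs alone already sweep out $V_1\cup V_2$ in the top range).
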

	
	\begin{proof}
		Suppose $m > n.$ If $0 < p \leq \frac{n+1}{m+n},$ then any vertex of $K_{m,n}$ is a $\gamma_{p}$-set. Thus the $p$-influencing set is $V_1 \cup V_2.$ If $p \geq \frac{m+2}{m+n},$ then  $\{v_1,v_2\}$ where $v_1 \in V_1$ and $v_2 \in V_2,$ is a $\gamma_{p}$-set. Thus the $p$-influencing set is $V_1 \cup V_2.$ Lastly, if $\frac{n+2}{m+n} \leq p \leq \frac{m+1}{m+n},$ then any single vertex from $V_2$ is a $\gamma_{p}$-set, and no vertex from $V_1$ can be in a $\gamma_p$-set.  
		
		The argument is similar when $m=n.$
	\end{proof}

	\begin{corollary}
		Consider the complete bipartite graph $K_{m,n} = (V_1, V_2, E)$ with $|V_1| = m, |V_2| = n,$ and $m > n.$ The intersection of all the $p$-influencing sets ($p>0$) of $K_{m,n}$ is $V_2.$
		Moreover, if $m=n,$ the intersection is $V_1 \cup V_2.$
	\end{corollary}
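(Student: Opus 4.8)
The plan is to read the corollary off directly from the preceding Proposition, which has already classified every $p$-influencing set of $K_{m,n}$. First suppose $m > n$. By that Proposition, for each $p > 0$ the $p$-influencing set of $K_{m,n}$ is either $V_1 \cup V_2$ or $V_2$; in both cases it contains $V_2$. Hence $V_2$ is contained in $\bigcap_{p>0}(\text{$p$-influencing set})$, giving one inclusion.

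For the reverse inclusion I would exhibit a single proportion whose influencing set is exactly $V_2$. Since $m$ and $n$ are positive integers with $m > n$, we have $n+2 \leq m+1$, so the range $\frac{n+2}{m+n} \leq p \leq \frac{m+1}{m+n}$ from the Proposition is nonempty; pick any such $p$. For that $p$ the $p$-influencing set equals $V_2$, so the intersection of all $p$-influencing sets is contained in $V_2$. Combining the two inclusions, the intersection is exactly $V_2$.

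For the case $m = n$, the Proposition states that the $p$-influencing set of $K_{m,n}$ is $V_1 \cup V_2$ for every $p > 0$, so the intersection over all $p>0$ is trivially $V_1 \cup V_2$.

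The only step needing a moment's care is the nonemptiness of the middle range of $p$ in the $m>n$ case --- equivalently, that there genuinely is a proportion for which no vertex of $V_1$ lies in any $\gamma_p$-set --- and this is exactly where the strict integer inequality $m > n$ is used (via $n+2 \le m+1$). Everything else is an immediate consequence of the Proposition, so I do not expect any real obstacle here.
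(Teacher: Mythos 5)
Your proposal is correct and matches the intended argument: the paper states this corollary without proof precisely because it follows by reading off the preceding proposition exactly as you do, and your observation that $m>n$ (integers) gives $n+2\le m+1$, making the middle range nonempty, is the only substantive point to check. The one pedantic caveat is that the proposition's stated ranges technically omit proportions such as $\frac{n+1}{m+n}<p<\frac{n+2}{m+n}$, but since $\lvert N[S]\rvert$ is an integer these reduce to the next listed threshold, so your ``either $V_1\cup V_2$ or $V_2$'' dichotomy still holds for every $p>0$.
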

	
	For the following proposition and corollary, we consider a path $P_n$ where the vertices are labeled $v_{i}$, $i \in \{1,2,\dots,n\}$ with $v_1, v_n$ as leaves and $N(v_i) = \{v_{i-1},v_{i+1}\}$ for $i \in \{2,3,\dots, n-1\}.$
	
	\begin{proposition}
		Consider a path $P_n$ with the vertices labeled as described above. The $p$-influencing sets of $P_n$ are given below. Note $k$ is a nonnegative integer chosen so that $0 < p \leq 1$ unless otherwise stated. 
		\begin{enumerate}
			\item For $n \equiv 0 \mod{3}$
			\begin{itemize}
				\item $V(P_n)$ if $p = \frac{3k+1}{n}$ or $p=\frac{3k+2}{n}$
				\item $\{v_2,v_3,\dots,v_{n-1}\}$ if $p=\frac{3k}{n} < 1$
				\item $\{v_2,v_5,v_8,\dots,v_{n-1}\}$ if $p=1$
			\end{itemize}
			\item For $n \equiv 1 \mod{3}$
			\begin{itemize}
				\item $V(P_n)$ if $p = \frac{3k+1}{n}$ or $p=\frac{3k+2}{n}$
				\item $\{v_2,v_3,\dots,v_{n-1}\}$ if $p=\frac{3k}{n}$ and $3k \neq n-1$
				\item $V(P_n)\setminus\{v_1,v_4,v_7,\dots,v_n\}$ if $p=\frac{n-1}{n}$
			\end{itemize}
			\item For $n \equiv 2 \mod{3}$
			\begin{itemize}
				\item $V(P_n)$ if $p = \frac{3k+1}{n}$ or $p=\frac{3k+2}{n} < 1$
				\item $\{v_2,v_3,\dots,v_{n-1}\}$ if $p=\frac{3k}{n}$ 
				\item $V(P_n)\setminus\{v_3,v_6,v_9,\dots,v_{n-2}\}$ if $p=1$
			\end{itemize}
		\end{enumerate}
		
	\end{proposition}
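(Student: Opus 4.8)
The plan is to reduce the statement to a finite combinatorial analysis. First I would record the basic facts: the only proportions that affect $\gamma_p$ or the $p$-influencing set are $p=j/n$ for integers $1\le j\le n$; a set $S$ is $p$-dominating exactly when $|N[S]|\ge j$; and in $P_n$ a vertex has closed neighbourhood of size $3$ if it is interior and $2$ if it is a leaf, so $|N[S]|\le\sum_{s\in S}|N[s]|\le 3|S|$, with both inequalities becoming equalities simultaneously precisely when $S$ consists of interior vertices whose closed neighbourhoods are pairwise disjoint (call such an $S$ \emph{efficient}). From the displayed inequality together with an explicit near-efficient set of size $\lceil j/3\rceil$ (which fits inside $P_n$ because $j\le n$) one gets $\gamma_p(P_n)=\lceil j/3\rceil=:k$.

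Next I would fix $p=j/n$, put $k=\gamma_p(P_n)$ and introduce the \emph{slack} $d:=3k-j\in\{0,1,2\}$. A vertex $v$ lies in the $p$-influencing set iff $v$ belongs to some $p$-dominating set of size $k$, and any such $S$ satisfies $\sum_{s\in S}|N[s]|-|N[S]|\le d$, so $S$ has at most $d$ ``defects'', where a leaf in $S$ counts as one defect and each unit of overlap among the closed neighbourhoods counts as one. The argument then splits according to $d$, and within each case according to whether $p<1$ (that is $j<n$) or $p=1$ (that is $j=n$).

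In the slack-zero case $d=0$ (equivalently $j\equiv 0\pmod 3$) every size-$k$ $p$-dominating set is efficient, so no leaf can occur, and an interior $v_i$ occurs iff the two subpaths obtained by deleting $N[v_i]$ jointly admit $k-1$ further efficient vertices, i.e. iff $\lfloor(i-2)/3\rfloor+\lfloor(n-i-1)/3\rfloor\ge k-1$. Writing the left side as $(n-3-r)/3$ with $r\equiv n\pmod 3$ and $0\le r\le 4$, a short computation shows this holds for every interior $i$ when $n\not\equiv 1\pmod 3$, and also when $n\equiv 1\pmod 3$ and $j<n-1$, giving influencing set $\{v_2,v_3,\dots,v_{n-1}\}$; when $j=n$ (which forces $n\equiv 0\pmod 3$ and $p=1$) the only efficient set of size $n/3$ is the unique partition of $P_n$ into consecutive triples, namely $\{v_2,v_5,\dots,v_{n-1}\}$; and when $n\equiv 1\pmod 3$ and $j=n-1$ the inequality fails exactly at the interior $v_i$ with $i\equiv 1\pmod 3$ and holds for all other vertices, giving $V(P_n)\setminus\{v_1,v_4,v_7,\dots,v_n\}$. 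When $d\in\{1,2\}$ and $p<1$ a leaf is now admissible: including $v_1$ (or $v_n$) spends one defect, and one can pack $k-1$ efficient interior vertices into the remaining path because $\lfloor(n-2)/3\rfloor\ge k-1$ follows from $j\le n-1$, while interior vertices are even easier; hence the influencing set is $V(P_n)$, covering the ``$p=(3k+1)/n$ or $(3k+2)/n$'' bullets. Finally, for $p=1$ with $d\ge 1$: if $n\equiv 1\pmod 3$ then $d=2$ and the same packing (now with two defects) places every vertex, both leaves included, into a minimum dominating set, so again the influencing set is $V(P_n)$ (the same bullet as the $(3k+1)/n$ case, since $p=1=(3k+1)/n$ when $3k+1=n$); if $n\equiv 2\pmod 3$ then $d=1$ and I would analyse minimum dominating sets of $P_{3s+2}$ directly — either exactly one centre is a leaf and the remaining $3s$ vertices are forced into consecutive triples, or all centres are interior with exactly one overlap — and check that in every case the centre indices avoid $0\pmod 3$ while every index $\not\equiv 0\pmod 3$ does occur, giving $V(P_n)\setminus\{v_3,v_6,\dots,v_{n-2}\}$.

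The conceptual content here is light; the main obstacle is the bookkeeping needed to keep three interacting parameters straight — $n\bmod 3$, the slack $d$, and the two boundary values $j=n$ and $j=n-1$ — and to verify in each combination both that the claimed vertices do appear in some minimum $p$-dominating set (via an explicit packing, which reduces to a floor-function inequality) and that the excluded vertices never do (via the defect count). The three places that resist the generic packing argument and require their own small structural input are: the forced unique triple-partition at $p=1$ when $n\equiv 0\pmod 3$; the exceptional behaviour at $p=(n-1)/n$ when $n\equiv 1\pmod 3$; and the description of minimum dominating sets of $P_{3s+2}$ at $p=1$.
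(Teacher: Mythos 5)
Your outline is correct, and it reaches the result by a genuinely different route from the paper. The paper partitions $V(P_n)$ into the three residue classes $V_1,V_2,V_3$ (indices $\equiv 1,2,0 \bmod 3$) and, for each value of $p$, directly exhibits which subsets of these classes are $\gamma_p$-sets, reading off the influencing set as their union. You instead prove $\gamma_p(P_n)=\lceil j/3\rceil$ for $p=j/n$, introduce the slack $d=3k-j$, and convert membership of a vertex in some minimum $p$-dominating set into a necessary-and-sufficient packing criterion (the floor-function inequality), with the budget of $d$ ``defects'' absorbing leaves and overlaps. The main thing your approach buys is a clean proof of the \emph{exclusion} direction: the defect count shows outright that, e.g., no leaf and no $v_i$ with $i\equiv 1\pmod 3$ can lie in a $\gamma_{(n-1)/n}$-set when $n\equiv 1\pmod 3$, whereas the paper essentially asserts that $V_2$ and $V_3$ are ``the only'' $\gamma_p$-sets (which, taken literally, is not even true -- in $P_7$ the set $\{v_2,v_6\}$ is a $\gamma_{6/7}$-set mixing the classes -- though the union, and hence the influencing set, is unaffected). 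The paper's tripartition, by contrast, gives the inclusion direction with almost no computation. One small notational slip in your write-up: the displayed inequality $\sum_{s\in S}|N[s]|-|N[S]|\le d$ accounts only for overlaps; the quantity that is bounded by $d$ and that also charges one unit per leaf is $3|S|-|N[S]|$. Your prose defines ``defect'' correctly, so this is cosmetic, but the formula should be fixed before the argument is written out in full.
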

	
	\begin{proof}
		Suppose $n \equiv 0\mod{3}.$ Partition $V(P_n)$ into three sets \[ V_1 = \{v_1,v_4,\dots,v_{n-2}\}, V_2 = \{v_2,v_5,\dots,v_{n-1}\}, V_3 = \{v_3,v_6,\dots,v_n\}.\] 
		Now consider $p = \frac{3k+1}{n}$ or $p=\frac{3k+2}{n}.$ Then $\gamma_{p}(P_n) = k+1,$ and any $k+1$ vertices of $V_1, V_2,$ or $V_3$ comprise a $\gamma_{p}$-set. 
		
		Now consider $p = \frac{3k}{n} < 1.$ Then $\gamma_{p}(P_n) = k,$ and any $k$ vertices of $V_1\setminus\{v_1\}, V_2$ or $V_3\setminus\{v_n\}$ comprises a $\gamma_{p}$-set.
		
		Lastly, if $p=1,$ then $\gamma_1(P_n) = \frac{n}{3},$ and $V_2$ is the only $\gamma_{1}$-st. \\
		
		Suppose $n \equiv 1 \mod{3}.$ Partition $V(P_n)$ into three sets
		\[ V_1 = \{v_1,v_4,\dots,v_{n}\}, V_2 = \{v_2,v_5,\dots,v_{n-2}\}, V_3 = \{v_3,v_6,\dots,v_{n-1}\}.\]
		Note that $|V_1| = \frac{n-1}{3}+1,$ $|V_2|=|V_3| = \frac{n-1}{3}.$
		
		Now consider $p = \frac{3k+1}{n}$ or $p = \frac{3k+2}{n},$ then $\gamma_{p}(P_n) = k+1.$ Now if $p < 1,$ so $k < \frac{n-1}{3},$ then any $k+1$ vertices of $V_1, V_2,$ or $V_3$ comprise a $\gamma_{p}$-set. If $p=1,$ so $k=\frac{n-1}{3},$ then $V_1,$ $V_2 \cup\{v_n\},$ and $V_3\cup\{v_1\}$ are $\gamma_{1}(P_n)$-sets.
		
		Now if $p = \frac{3k}{n}$ and $3k \neq n-1,$ so $k < \frac{n-1}{3}.$ Then $\gamma_{p}(P_n) = k,$ and any $k$ vertices from $V_1 \setminus \{v_1,v_n\}, V_2,$ or $V_3$ comprise a $\gamma_{p}$-set. 
		
		Lastly, if $p=\frac{n-1}{n},$ then  $\gamma_p(P_n) = \frac{n-1}{3}.$ Then note that $V_2$ and $V_3$ are the only $\gamma_p(P_n)$-sets. \\

		Suppose $n \equiv 2\mod{3}.$ Partition $V(P_n)$ into three sets
		\[ V_1 = \{v_1,v_4,\dots,v_{n-1}\}, V_2 = \{v_2,v_5,\dots,v_{n}\}, V_3 = \{v_3,v_6,\dots,v_{n-2}\}.\]
		Note that $|V_1| = |V_2| = \frac{n+1}{3},$ and $|V_3|=\frac{n-2}{3}.$
		
		Now consider $p=\frac{3k+1}{n}$ or $p=\frac{3k+2}{n} < 1,$ so $k < \frac{n-2}{3}.$ Then $\gamma_{p}(P_n) = k+1$ and any $k+1$ vertices from $V_1, V_2,$ or $V_3$ comprise a $\gamma_p$-set.
		
		Now if $p=\frac{3k}{n},$ so $k \leq \frac{n-2}{3},$ then $\gamma_{p}(P_n) = k.$ Any $k$ vertices from $V_1\setminus{v_1}, V_2\setminus{v_n},$ or $V_3$ will be a $\gamma_p$-set. 
		
		Lastly, if $p=1,$ then $\gamma_1(P_n) = \frac{n+1}{3}.$ Note that $V_1$ and $V_2$ are the only $\gamma_{1}(P_n)$-sets. 
	\end{proof}

	\begin{corollary}
		The intersection of all $p$-influencing sets ($p > 0$) for a path $P_n$ is 
		\begin{enumerate}
			\item $\{v_{2+3k} | 0 \leq k \leq \frac{n-3}{3}\}$ if $n \equiv 0\mod{3}$
			\item $\{v_{2+3k},v_{3+3k} | 0 \leq k \leq \frac{n-4}{3}\}$ if $n \equiv 1\mod{3}$
			\item $\{v_{1+3k},v_{2+3j} | 0 < k \leq \frac{n-2}{3}, 0 \leq j < \frac{n-2}{3}\}$ if $n \equiv 2\mod{3}$.
		\end{enumerate}
	\end{corollary}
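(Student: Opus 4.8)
The plan is to intersect, residue class by residue class, the three families of $p$-influencing sets computed in the preceding proposition. Two observations keep this short. First, the only proportions that matter are $p\in\{1/n,2/n,\dots,n/n\}$, and each of these is of the form $\frac{3k}{n}$, $\frac{3k+1}{n}$, or $\frac{3k+2}{n}$; so the proposition already records \emph{every} $p$-influencing set with $p>0$. Second, for most of these $p$ the influencing set is all of $V(P_n)$, and such sets disappear from an intersection. Hence in each case the intersection reduces to the intersection of the (at most two) influencing sets that are proper subsets of $V(P_n)$, and there one set always contains the other, so the answer is the smaller of the two.

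When $n\equiv0\bmod 3$, the only proper influencing sets are $\{v_2,\dots,v_{n-1}\}$ (occurring for $p=\frac{3k}{n}<1$) and $V_2=\{v_2,v_5,\dots,v_{n-1}\}$ (occurring for $p=1$), so the intersection is $V_2$, which one rewrites as $\{v_{2+3k}\mid 0\le k\le\frac{n-3}{3}\}$. When $n\equiv1\bmod 3$, the proper influencing sets are $\{v_2,\dots,v_{n-1}\}$ and $V(P_n)\setminus V_1$ (for $p=\frac{n-1}{n}$), where $V_1=\{v_1,v_4,\dots,v_n\}$; since $v_1,v_n\in V_1$ the set $V(P_n)\setminus V_1=V_2\cup V_3$ lies inside $\{v_2,\dots,v_{n-1}\}$, so the intersection is $V_2\cup V_3=\{v_{2+3k},v_{3+3k}\mid 0\le k\le\frac{n-4}{3}\}$. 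When $n\equiv2\bmod 3$, the proper influencing sets are $\{v_2,\dots,v_{n-1}\}$ and $V(P_n)\setminus V_3$ (for $p=1$), where $V_3=\{v_3,\dots,v_{n-2}\}\subseteq\{v_2,\dots,v_{n-1}\}$; using the partition $V(P_n)=V_1\cup V_2\cup V_3$ together with $v_1\in V_1$ and $v_n\in V_2$, removing $V_3$ from $\{v_2,\dots,v_{n-1}\}$ leaves $(V_1\setminus\{v_1\})\cup(V_2\setminus\{v_n\})=\{v_{1+3k},v_{2+3j}\mid 0<k\le\frac{n-2}{3},\ 0\le j<\frac{n-2}{3}\}$.

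No idea beyond the proposition is needed; the single point requiring care is endpoint bookkeeping. One must identify which subcase of the proposition the extreme proportions fall into --- for $n\equiv0$ the value $p=1$ gives $V_2$ rather than $\{v_2,\dots,v_{n-1}\}$, because the latter is asserted only for $\frac{3k}{n}<1$, and similarly $p=\frac{n-1}{n}$ is a special subcase when $n\equiv1$ --- and then translate each resulting vertex set from its $V_i$-description into the arithmetic-progression form of the statement. For small $n$ one of the listed proportions may be vacuous, but since in each case the larger proper influencing set contains the smaller one, this does not change the intersection.
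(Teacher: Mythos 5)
The paper states this corollary without proof, and your derivation---intersecting, case by case, the influencing sets listed in the preceding proposition---is exactly the intended argument; the endpoint bookkeeping in all three residue classes checks out. One small slip in your overview paragraph: for $n\equiv 2\pmod{3}$ the two proper influencing sets $\{v_2,\dots,v_{n-1}\}$ and $V(P_n)\setminus V_3$ are \emph{not} nested (the latter contains $v_1$ and $v_n$, the former contains $V_3$), so the answer is their genuine intersection rather than ``the smaller of the two''; your detailed computation for that case performs the intersection correctly, so the conclusion is unaffected.
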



\begin{thebibliography}{1}
	\bibitem{Bollobas}B. Bollob\'{a}s and E. J. Cockayne. Graph-theoretic parameters concerning domination, independence, and irredundance. \textit{J. Graph Theory}, 3(3):241–249, 1979.
	\bibitem{CHLL17}  Benjamin M. Case, Stephen T. Hedetniemi, Renu C. Laskar, and Drew J. Lipman. Partial domination in graphs. \textit{Congressus Numerantium}, v.228:85–95, 2017.
	\bibitem{CHLL17arXiv} Benjamin M. Case, Stephen T. Hedetniemi, Renu C. Laskar, and Drew J. Lipman. Partial domination in graphs. \textit{arXiv:1705.03096}, May 8, 2017.
	\bibitem{Cock1978}E. J. Cockayne, S. T. Hedetniemi, and D. J. Miller. Properties of hereditary hypergraphs and middle graphs. \textit{Canad. Math. Bull.}, 21(4):461–468, 1978.
	\bibitem{Das2018} Angsuman Das. Partial domination in graphs. \textit{Iranian Journal of Science and Technology, Transactions A: Science}, Jul 2018.
	\bibitem{Das17arXiv}  Angsuman Das. Partial domination in graphs. \textit{arXiv:1707.04898}, July 16, 2017.
	\bibitem{Goncalves}	Daniel Gon\c{c}alves, Alexandre Pinlou, Micha\"el Rao, and St\'ephan Thomass\'e. The dom-ination number of grids. \textit{SIAM J. Discrete Math.}, 25(3):1443–1453, 2011.
	\bibitem{Harary} Frank Harary and Teresa W. Haynes. Nordhaus-Gaddum inequalities for domination graphs. \textit{Discrete Math.}, 155(1-3):99–105, 1996. Combinatorics (Acireale, 1992).
	\bibitem{Hutson} Kevin R. Hutson, Stephen T. Hedetniemi, and Richard Forrester. Constructing $\gamma$-sets of grids. \textit{J. Combin. Math. Combin. Comput.}, 95:3–26, 2015.
\end{thebibliography}
\end{document}